\theoremstyle{theorem}
\newtheorem{theorem}{Theorem}[section]
\newtheorem{lemma}[theorem]{Lemma}
\theoremstyle{remark}
\newtheorem{example}{Example}
\newcommand{\p}{\partial}
\newcommand{\eqnref}[1]{(\ref {#1})}
\newcommand{\Cbb}{\mathbb{C}}
\newcommand{\Rbb}{\mathbb{R}}
\newcommand{\Zbb}{\mathbb{Z}}
\newcommand{\la}{\langle}
\newcommand{\ra}{\rangle}
\newcommand{\Hcal}{\mathcal{H}}
\newcommand{\Kcal}{\mathcal{K}}
\newcommand{\Scal}{\mathcal{S}}
\newcommand{\Tcal}{\mathcal{T}}
\newcommand{\Ga}{\alpha}
\newcommand{\Gb}{\beta}
\newcommand{\Ge}{\epsilon}
\newcommand{\Gvf}{\varphi}
\newcommand{\Gk}{\kappa}
\newcommand{\Gl}{\lambda}
\newcommand{\Gv}{\nu}
\newcommand{\Gt}{\theta}
\newcommand{\Gs}{\sigma}
\newcommand{\Gj}{\tau}
\newcommand{\Gy}{\psi}
\newcommand{\GO}{\Omega}
\newcommand{\beq}{\begin{equation}}
\newcommand{\eeq}{\end{equation}}
\def\ol{\overline}
\numberwithin{equation}{section}
\numberwithin{figure}{section}
\begin{document}
\title{Exponential decay estimates of the eigenvalues for the Neumann-Poincar\'{e} operator on analytic boundaries in two dimensions\thanks{This work is supported by A3 Foresight Program among China (NSF), Japan (JSPS), and Korea (NRF 2014K2A2A6000567)}}

\author{Kazunori Ando\thanks{Department of Electrical and Electronic Engineering and Computer Science, Ehime University, Ehime 790-8577, Japan. Email: {\tt ando@cs.ehime-u.ac.jp}.} \and Hyeonbae Kang\thanks{Department of Mathematics, Inha University, Incheon 402-751, S. Korea. Email: {\tt hbkang@inha.ac.kr}.} \and Yoshihisa Miyanishi\thanks{Center for Mathematical Modeling and Data Science, Osaka University, Osaka 560-8631, Japan. Email: {\tt miyanishi@sigmath.es.osaka-u.ac.jp}.}}

\maketitle

\begin{abstract}
We show that the eigenvalues of the Neumann-Poincar\'{e} operator on analytic boundaries of simply connected bounded planar domains tend to zero exponentially fast, and the exponential convergence rate is determined by the maximal Grauert radius of the boundary. We present a few examples of boundaries to show that the estimate is optimal.
\end{abstract}

\noindent{\footnotesize {\bf AMS subject classifications}. 35R30, 35C20.}

\noindent{\footnotesize {\bf Key words}. Neumann-Poincar\'e operator, eigenvalues, analytic boundary, exponential decay, maximal Grauert radius}

\section{Introduction}\label{sec:intro}

The Neumann-Poincar\'{e} (NP) operator is an integral operator defined on the boundary of a bounded domain. It arises naturally when solving  the Dirichlet and Neumann boundary value problems for the Laplacian in terms of layer potentials. As the name suggests, its study goes back to Neumann \cite{Neumann-87} and Poincar\'{e} \cite{Poincare-AM-87}. It was a central object in the Fredholm theory of integral equations and the theory of singular integral operators.

In this paper we consider spectral properties of the NP operator. There were some work on spectral properties of the NP operator around 1950s (see, for example, \cite{MR0104934} and references therein). Lately we see rapidly growing interest in spectral properties of the NP operator, which is due its connection to plasmon resonance and cloaking by anomalous localized resonance; see e.g. \cite{PhysRevB.72.155412, MR2263683, MR2186014} and references therein. In fact, in the quasi-static limit, the plasmon resonance takes place at the eigenvalues of the NP operator \cite{AMRZ, AKL-SIAP-16}, and the anomalous localized resonance takes place at the accumulation point of eigenvalues \cite{ACKLM, AK-JMAA-16}.

Recently, there has been considerable progress on the spectral theory of the NP operator. In \cite{KPS} Poincar\'{e}'s variational problem has been revisited with modern language of mathematics. Among findings of the paper is that the NP operator can be symmetrized by introducing a new (but equivalent) inner product to $H^{-1/2}$ space, the Sobolev $-1/2$ space. This is a quite important discovery for the spectral theory of the NP operator. The NP operator, as a self-adjoint operator, has only two kinds of spectra: continuous spectrum and discrete spectrum (see, for example, \cite{Yosida}). If a given domain has a smooth boundary, then the NP operator is compact and has only eigenvalues accumulating to $0$. If the boundary has a corner, then the NP operator is a singular integral operator, and may have continuous spectrum. We refer to \cite{HKL, KLY, PP1, PP2} for recent developments on the NP spectral theory on planar domains with corners. Here and afterwards, the \emph{NP spectrum} is an abbreviation of the spectrum of the NP operator.

As mentioned above, if the domain has a smooth, $C^{1,\Ga}$ ($\Ga>0$) to be precise, boundary, then the NP operator is compact and has eigenvalues converging to $0$. In the recent paper \cite{Miyanishi:2015aa}, a quantitative estimate of the decay rate of NP eigenvalues has been obtained:
Let $\{ \Gl_j \}$ be the NP eigenvalues arranged in such a way that $| \Gl_1 | = | \Gl_2| \ge | \Gl_3 | = | \Gl_4 | \ge \cdots$.
It is proved that if the boundary of the domain is $C^k$ ($k \ge 2 $), then
\beq
|\Gl_n| = o(n^{\Ga}) \quad \text{ as } n \to \infty,
\eeq
for any $\Ga > - k + 3/2$. So, if, in particular, the boundary is $C^{\infty}$ smooth, then NP eigenvalues decay faster than any algebraic order.
On the other hand, the NP eigenvalues on the ellipse of the long axis $a$ and the short axis $b$ are known to be
\beq\label{eigell}
\pm \frac{1}{2} \left( \frac{a - b}{a + b} \right)^n , \quad n=1,2,\ldots.
\eeq
So, one may suspect that NP eigenvalues on analytic boundaries tend to $0$ exponentially fast. We prove it in this paper.

We show that if the boundary is analytic, then NP eigenvalues converge to $0$ exponentially fast and the exponential convergence rate is determined by the modified maximal Grauert radius. See Theorem \ref{thm:3} for the precise statement of the result and subsection \ref{sec:grauert} for the definition of the modified maximal Grauert radius. We do not know if the convergence rate is optimal in general. However, we show that it is optimal on domains like disks, ellipses, and
lima\c{c}ons of Pascal. It is worth emphasizing that the main theorem is proved using the Weyl-Courant min-max principle and a Paley-Wiener type lemma (Lemma \ref{lem:2}).

This paper is organized as follows. In section \ref{sec:prelim} we review symmetrization of the NP operator, define the modified maximal Grauert radius (and tube), and show that the integral kernel of the NP operator admits analytic continuation to the modified maximal Grauert tube. Section \ref{sec:Complex_Fourier} is to present and prove the main result of this paper. Section \ref{sec:Examples} is to present some examples to show that the modified maximal Grauert radius yields the best possible bound for the convergence.

\section{Preliminaries}\label{sec:prelim}
\subsection{The NP operator and symmetrization}

Once for all, we assume that $\GO$ is a bounded planar domain whose boundary, $\p\GO$, is analytic.
The single layer potential of  a function $\Gvf$ on $\p \GO$ is defined by
$$
\Scal_{\p \GO} [\Gvf] (x) = \frac{1}{2\pi} \int_{\p \GO} \ln |x-y| \Gvf(y) d\Gs(y), \quad x \in \Rbb^2,
$$
where $d\Gs$ is the length element of $\p \GO$.
The NP operator on $\p \GO$ is defined by
$$
\Kcal_{\p \GO}^* [\Gvf] (x) = \frac{1}{2 \pi} \int_{\p \GO} \frac{\la x - y, \Gv_x \ra}{| x - y |^2}  \Gvf(y) d\Gs(y), \quad x \in \p \GO,
$$
where $\nu_x$ is the outward unit normal vector at $x \in \p\GO$. The relation between the NP operator and the single layer potential is given by the jump relation for which we refer to, for example, \cite{book2}. It is well-known that the Plemelj's symmetrization principle holds:
\beq\label{plem}
\Scal_{\p \GO} \Kcal_{\p \GO}^* = \Kcal_{\p \GO} \Scal_{\p \GO},
\eeq
where $\Kcal_{\p \GO}$ is the $L^2(\p \GO)$-adjoint of $\Kcal_{\p \GO}^*$.

We denote by $H^s = H^s(\p \GO)$, $s \in \Rbb$, the usual Sobolev space on $\p \GO$ and its norm is denoted by $\| \cdot \|_s$.
Define
\beq
  \la \Gvf, \Gy \ra_{\Hcal^*} := - \la \Gvf, \Scal_{\p \GO} [\Gy] \ra_{L^2} \label{eq:new_inner_product1}
\eeq
for $\varphi, \psi \in H_0^{- 1 / 2} := \{ \Gvf \in H^{- 1 / 2}; \la \Gvf, 1 \ra_{L^2} = 0 \}$.
We emphasize that the right hand side of \eqref{eq:new_inner_product1} is well-defined since $\Scal_{\p\GO}$ (as an operator on $\p\GO$) maps $H^{- 1 / 2}$ to $H^{1 / 2}$.
It is known that $\la \cdot, \cdot \ra_{\Hcal^*}$ is an inner product on $H_0^{- 1 / 2}$ and induces the norm equivalent to $\| \cdot \|_{-1/2}$, namely, there are constants $C_1$ and $C_2$ such that
\beq\label{twonorm}
C_1 \| \Gvf \|_{- 1 / 2} \le \| \Gvf \|_{\Hcal^*} \le C_2 \| \Gvf \|_{- 1 / 2}
\eeq
for all $\Gvf \in H_0^{- 1 / 2}$ (see \cite{KKLSY}).

We put $\Hcal_0^* := H_0^{- 1 / 2}$ equipped with the inner product $\la \cdot, \cdot \ra_{\Hcal^*}$.
Then $\Kcal_{\p \GO}^*$ is a self-adjoint operator on $\Hcal_0^*$. In fact, we have from \eqnref{plem}
$$
\la \Gvf, \Kcal_{\p \GO}^*[\psi] \ra_{\Hcal^*} = \la \Gvf, \Scal_{\p \GO} \Kcal_{\p \GO}^*[\psi] \ra_{L^2} = \la \Gvf, \Kcal_{\p \GO} \Scal_{\p \GO}[\psi] \ra_{L^2} = \la \Kcal_{\p \GO}^*[\Gvf], \psi \ra_{\Hcal^*}.
$$
So, as a self-adjoint compact operator on a Hilbert space, $\Kcal_{\p \GO}^*$ has eigenvalues converging to $0$. It is known that all eigenvalues lie in $(- 1 / 2, 1 / 2)$ (see \cite{MR0222317}). It is worth mentioning that $1/2$ is an eigenvalue of $\Kcal_{\p \GO}^*$ of multiplicity $1$ if we consider $\Kcal_{\p \GO}^*$ as an operator on $H^{- 1 / 2}$, not on $H_0^{- 1 / 2}$.

\subsection{Maximal Grauert radius}\label{sec:grauert}

Let $S^1$ be the unit circle and $Q:  S^1 \rightarrow \p \GO \subset{\Cbb}$ be a regular real analytic parametrization of $\p\GO$. Here and afterwards we identify $\Rbb^2$ with the complex plane $\Cbb$ by the correspondence $(x_1, x_2) \mapsto x_1 + i x_2$. Then $Q$ admits an extension as an analytic function from an annulus
\beq
A_\Ge:= \{ \Gj \in {\Cbb}\; :\; e^{-\Ge}<|\Gj|<e^{\Ge}\; \} \label{analytic_annulus}
\eeq
for some $\Ge>0$ onto a tubular neighborhood of $\p\GO$ in $\Cbb$. Let
\beq
q(t):=Q(e^{it}), \quad t \in [-\pi, \pi) \times i (-\Ge, \Ge).
\eeq
Then $q$ is an analytic function from $[-\pi, \pi) \times i (-\Ge, \Ge)$ onto a tubular neighborhood of $\p\GO$.
Moreover, $q$ can be extended to $\Rbb \times i (-\Ge, \Ge)$ as an analytic $2\pi$-periodic function, namely, $q(t + 2\pi) = q(t)$.
The supremum, denoted by $\Ge_*$, of the collection of such $\Ge$ is called the {\it maximal Grauert radius} of $q$, and the set $\Rbb \times i (-\Ge_*, \Ge_*)$ the {\it maximal Grauert tube}.

In this paper we consider the numbers $\Ge$ such that $q$ satisfies an additional condition:
$$
\mbox{(G)} \quad \mbox{if $q(t)=q(s)$ for $t \in [-\pi, \pi) \times i (-\Ge, \Ge)$ and $s \in [-\pi, \pi)$, then $t=s$}.
$$
It is worth emphasizing that the condition (G) is weaker than univalence. It only requires that $q$ attains values $q(s)$ ($s \in [-\pi, \pi)$) only at $s$. The condition (G) is imposed for the integral kernel of the NP operator to be continued analytically (see \eqnref{analcon}).
We will see that this condition yields optimal convergence rate of the NP operator in examples in section \ref{sec:Examples}.

Since $Q$ is one-to-one on $\p \GO$, the extended function is univalent in $A_\Ge$ if $\Ge$ is sufficiently small.  So, the condition (G) is fulfilled if $\Ge$ is small. We denote the supremum of such $\Ge$ by $\Ge_q$. We emphasize that $\Ge_q$ may differ depending on the parametrization $q$ (see Example \ref{ex:limacon1} in section \ref{sec:Examples}). Let
\beq
\Ge_{\p\GO} := \sup_{q} \Ge_q,
\eeq
where the supremum is taken over all regular real analytic parametrization $q$ of $\p\GO$. We call $\Ge_{\p\GO}$
the {\it modified maximal Grauert radius} of $\p\GO$. The set $\Rbb \times i (-\Ge_{\p\GO}, \Ge_{\p\GO})$ is called the {\it modified maximal Grauert tube}, which we denote by $G_{\p\GO}$.

\subsection{Analytic extension of the NP operator} \label{sec:anal_ext}

Let $q$ be a regular real analytic parametrization on $[-\pi, \pi)$ of $\p\GO$. For $x, y \in \p\GO$, let $x=q(t)$ and $y=q(s)$. Then the outward unit normal vector $\nu_x$ is given by $-i q'(t)/|q'(t)|$ in the complex form. So we have
$$
\la x - y, \Gv_x \ra = \frac{1}{|q'(t)|} \Re \left[ (q(t)-q(s))\ol{(-iq'(t))} \right],
$$
and hence
$$
\frac{\la x - y, \Gv_x \ra}{| x - y |^2} =\frac{1}{2i|q'(t)|}\Big[\frac{q'(t)}{q(t)-q(s)}-\frac{\ol{q'(t)}}{\ol{q(t)} - \ol{q(s)} } \Big].
$$
So we have
$$
\Kcal_{\p \GO}^* [\Gvf] (q(t)) =
\frac{1}{4\pi i}\int_0^{2\pi} \Big[\frac{q'(s)}{q(t)-q(s)}-\frac{\ol{q'(t)}}{\ol{q(t)} - \ol{q(s)} } \Big] \, \Gvf (q(s)) ds.
$$

Define
\beq
K_q(t,s):= \frac{1}{4\pi i}\Big[\frac{q'(t)}{q(t)-q(s)}-\frac{\ol{q'(t)}}{\ol{q(t)} - \ol{q(s)} } \Big]
\eeq
and
\beq\label{Kcaldef}
\Kcal_q [f](t):= \int_{-\pi}^{\pi} K_q(t,s) f(s) ds, \quad -\pi \le t \le \pi.
\eeq
Then we have the relation
$$
\Kcal_q [\Gvf \circ q](t) =  \Kcal_{\p \GO}^* [\Gvf] (q(t)) .
$$

Let
\beq
q^*(s):= \ol{q(\ol{s})}, \quad s \in G_{\p\GO}.
\eeq
Since $q$ is analytic in $\Rbb \times i(-\Ge_q, \Ge_q)$ and satisfies the condition (G), for each fixed $t \in [-\pi, \pi)$ the kernel $K_q(t,s)$ as a function of $s$-variable has an analytic continuation to $[-\pi, \pi) \times i (-\Ge_{q}, \Ge_{q}) \setminus \{t\}$, which is given by
\beq\label{analcon}
K_q (t,s):= \frac{1}{4\pi i}\Big[\frac{q'(t)}{q(t)-q(s)}-\frac{\overline{q'(t)}}{\ol{q(t)} - q^*(s) } \Big].
\eeq
Moreover one can easily see that
\beq
\lim_{s \to t} \Big[\frac{q'(t)}{q(t)-q(s)}-\frac{\ol{q'(t)}}{\ol{q(t)} - q^*(s) } \Big] = \frac{-q'(t) \ol{q''(t)} + \ol{q'(t)} q''(t)}{2 |q'(t)|^2}. \label{removable_sing}
\eeq
Note that $q'(t) \neq 0$.
So $K_q(t,s)$ has a removable singularity at $s=t$. Thus for each fixed $t \in [-\pi, \pi)$, $K_q(t,s)$ has an analytic continuation (as a function of $s$-variable) in $[-\pi, \pi) \times i (-\Ge_{q}, \Ge_{q})$, and extends to $\Rbb \times i(-\Ge_q, \Ge_q)$ as a $2\pi$-periodic function, namely,
\beq
K_q(t, s + 2\pi) = K_q(t, s).
\eeq

Define the space $H_0$ by
\beq
H_0:= \{ \; f \; ; \; \textrm{$f(t)=(\Gvf \circ q)(t)$ ($t \in [-\pi, \pi)$) for some $\Gvf \in \Hcal^*_0$ and is $2\pi$-periodic} \; \}.
\eeq
We emphasize that $H_0$ is the collection of $2 \pi$-periodic functions in $H^{-1/2}_0[-\pi, \pi]$ equipped with the inner product inherited from $\Hcal^*_0$:
$$
\la f, g \ra_H = \la \Gvf, \Gy \ra_{\Hcal^*},
$$
where $f(t) = (\Gvf \circ q)(t)$ and $g(t) = (\Gy \circ q)(t)$ for $\Gvf, \Gy \in \Hcal^*_0$.

In the next section we look into the spectrum of $\Kcal_q$ on the space $H_0$.

\section{The main result} \label{sec:Complex_Fourier}

Let $\left\{ \Gl_n \right\}_{n = 1}^\infty$ be the eigenvalues of the NP operator $\Kcal_{\p \GO}^*$ on $H^{-1/2}_0(\p\GO)$, or equivalently, of $\Kcal_q$ on $H_0$ as defined in \eqnref{Kcaldef}. Since eigenvalues of the NP operator in two dimensions are symmetric with respect to the origin (see e.g. \cite{Blumenfeld:1914aa, MR0104934}, also \cite{HKL}), we may assume that eigenvalues are enumerated in the following way:
\beq\label{eigenvalue}
\frac{1}{2} > \left| \Gl_1 \right| = \left| \Gl_2 \right| \ge \left| \Gl_3 \right| = \left| \Gl_4 \right| \ge \cdots.
\eeq

The following theorem is the main result of this paper.

\begin{theorem}\label{thm:3}
Let $\GO$ be a bounded planar domain with the analytic boundary $\p \GO$ and $\Ge_{\p\GO}$ be the modified maximal Grauert radius of $\p \GO$.
Let $\left\{ \Gl_n \right\}_{n = 1}^\infty$ be the eigenvalues of the NP operator $\Kcal_{\p \GO}^*$ on $H^{-1/2}_0(\p\GO)$ enumerated as \eqnref{eigenvalue}. For any $\Ge < \Ge_{\p\GO}$ there is a constant $C$ such that
\beq\label{eigenest}
|\Gl_{2n-1}|=|\Gl_{2n}| \le Ce^{-n\Ge}
\eeq
for any $n$.
\end{theorem}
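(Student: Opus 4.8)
The plan is to estimate the eigenvalues of $\Kcal_q$ on $H_0$ via the Weyl–Courant min-max principle, by constructing a well-chosen finite-dimensional trial subspace on which the quadratic form $|\la f, \Kcal_q[f]\ra_H|$ is small, and controlling the quotient by the equivalence \eqnref{twonorm}. The natural subspace to work against is the span of the complex exponentials $e^{ikt}$: since the $\Gl_n$ are eigenvalues of a self-adjoint operator on $H_0$, for an $(2n-1)$-codimensional subspace $V$ we have $|\Gl_{2n-1}| \le \sup_{f\in V,\, f\neq 0} |\la f, \Kcal_q[f]\ra_H| / \|f\|_H^2$; taking $V$ to be the orthogonal complement (in $L^2$) of $\{e^{ikt} : |k| \le n-1\}$, i.e.\ functions whose Fourier coefficients vanish for $|k|\le n-1$, reduces everything to bounding the kernel's action on high-frequency inputs.

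First I would analytically continue: for fixed $t$, the function $s \mapsto K_q(t,s)$ is analytic and $2\pi$-periodic on the strip $\Rbb \times i(-\Ge_q, \Ge_q)$, by the discussion in subsection \ref{sec:anal_ext} (equations \eqnref{analcon} and \eqnref{removable_sing}). Fix $\Ge < \Ge_{\p\GO}$; by definition of $\Ge_{\p\GO}$ there is a parametrization $q$ with $\Ge < \Ge_q$, so we may work on a strip of half-width $\Ge' \in (\Ge, \Ge_q)$. The key technical input is a Paley–Wiener type estimate (the referenced Lemma \ref{lem:2}): if $g$ is analytic and $2\pi$-periodic on a strip of half-width $\Ge'$ with bounded continuation, then its Fourier coefficients $\hat g(k)$ decay like $O(e^{-\Ge'|k|})$; conversely, convolving against such a kernel damps high frequencies. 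Applying this to $K_q(t,\cdot)$ in the $s$-variable shows that for $f$ supported (in Fourier) on $|k| \ge n$, the output $\Kcal_q[f](t) = \int K_q(t,s) f(s)\,ds$ has size controlled by $e^{-n\Ge'}$ times a norm of $f$ — here one shifts the contour of integration in $s$ from the real axis to the line $\Im s = \pm\Ge'$ (sign chosen according to the decay direction of the Fourier modes of $f$), picking up no residue because $K_q(t,\cdot)$ is analytic on the strip, and the $2\pi$-periodicity makes the vertical boundary contributions cancel.

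The main obstacle, I expect, is bookkeeping the two different inner products: the min-max principle must be applied in the $\Hcal^*$-inner product (equivalently $\la\cdot,\cdot\ra_H$) in which $\Kcal_q$ is self-adjoint, whereas the contour-shift estimate most naturally produces bounds in $L^2$ or in $H^{-1/2}$ norms. One resolves this using \eqnref{twonorm}: the $\Hcal^*$-norm and the $H^{-1/2}$-norm are equivalent on $H_0^{-1/2}$, so it suffices to obtain, on the trial subspace $V$ of codimension $2n-1$ (or $2n-2$, absorbing the factor of two from the eigenvalue pairing \eqnref{eigenvalue} into the constant), an estimate of the form $\|\Kcal_q[f]\|_{1/2} \le C e^{-n\Ge} \|f\|_{-1/2}$, and then $|\la f, \Kcal_q[f]\ra_{\Hcal^*}| = |\la f, \Scal_{\p\GO}\Kcal_q[f]\ra_{L^2}| \le C' \|f\|_{-1/2}\|\Kcal_q[f]\|_{1/2} \le C'' e^{-n\Ge}\|f\|_{-1/2}^2 \le C''' e^{-n\Ge}\|f\|_{\Hcal^*}^2$. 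Dividing and invoking min-max gives $|\Gl_{2n-1}| = |\Gl_{2n}| \le C e^{-n\Ge}$. A secondary point requiring care is the removable singularity at $s=t$: one must check that the contour shift is legitimate uniformly in $t$, which follows from \eqnref{removable_sing} showing the continued kernel is jointly continuous (hence uniformly bounded on compact pieces of the strip), so that the constants in the Fourier/contour estimates can be taken independent of $t$.
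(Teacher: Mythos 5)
Your overall strategy matches the paper's: Fourier-expand the kernel $K_q(t,s)$ in $s$, obtain exponential decay of the coefficients $a_k^q(t)$ by a contour shift in the strip $\Rbb\times i(-\Ge_q,\Ge_q)$ (this is precisely Lemma~\ref{lem:2}), then pass this to an eigenvalue estimate via Weyl--Courant and transfer norms using \eqnref{twonorm}. The trial subspace $V=\{f\in H_0 : \hat f(k)=0,\ 0<|k|\le n-1\}$ is also the right one, and your intermediate estimate $\|\Kcal_q[f]\|_{1/2}\le Ce^{-n\Ge}\|f\|_{-1/2}$ on $V$ is exactly what is needed.

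However, the form of the min--max principle you invoke is not correct for an indefinite compact self-adjoint operator. You claim that for $V$ of codimension $2n-1$ one has $|\Gl_{2n-1}|\le \sup_{f\in V}|\la f,\Kcal_q[f]\ra_H|/\|f\|_H^2$. This fails already for $n=1$: if $\Tcal$ has eigenvalues $\pm\frac14$ (as the NP eigenvalues always come in $\pm$ pairs) with eigenvectors $e_1,e_2$, take $V$ to be the codimension-$1$ space spanned by $e_1+e_2$ and all $e_k$, $k\ge3$; then for $f=e_1+e_2$ one has $\la f,\Tcal f\ra=0$, and the sup over $V$ of $|\la f,\Tcal f\ra|/\|f\|^2$ is bounded by $|\Gl_3|$, which can be far smaller than $|\Gl_1|=\frac14$. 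The quadratic form $\la f,\Tcal f\ra$ sees cancellation between the positive and negative parts of the spectrum; $|\la f,\Tcal f\ra|$ is not a Rayleigh quotient for $|\Tcal|$. Even if you instead applied Courant--Fischer only to the positive eigenvalues $\mu_1^+\ge\mu_2^+\ge\cdots$, a subspace of codimension $2(n-1)$ would bound $\mu_{2n-1}^+=|\Gl_{4n-3}|$, not $\mu_n^+=|\Gl_{2n-1}|$, costing you a factor of $2$ in the exponent. The correct variational quantity is $\sup_{f\in V,\,f\ne0}\|\Kcal_q[f]\|_H/\|f\|_H$ (the $s$-number characterization $s_{m+1}(\Tcal)=\min_{\mathrm{codim}\,V\le m}\sup_{f\in V}\|\Tcal f\|/\|f\|$), or equivalently the finite-rank approximation form that the paper actually uses (Theorem~\ref{lem:W-C_minimax}): taking $\Scal_n$ with kernel $S_n(t,s)=\sum_{|k|\le n-1}a_k^q(t)e^{iks}$, which has rank at most $2(n-1)$ on $H_0$, one has $\|\Kcal_q-\Scal_n\|\ge|\Gl_{2n}|$, and your $\|\cdot\|_{1/2}\to\|\cdot\|_{-1/2}$ estimate on $V=\ker\Scal_n$ then gives the operator norm bound after invoking \eqnref{twonorm}. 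Once this substitution is made, the rest of your argument goes through and reproduces the paper's proof.
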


The rest of this section is devoted to proving Theorem \ref{thm:3}.

We first emphasize that the operator $\Kcal_q$ is symmetric on $H_0$. In fact, we have
$$
\la f, \Kcal_q[g] \ra_H = \la \Gvf, \Kcal_{\p \GO}^*[\Gy] \ra_{\Hcal^*} = \la \Kcal_{\p \GO}^*[\Gvf], \Gy \ra_{\Hcal^*} = \la \Kcal_q[f], g \ra_H,
$$
where $f(t) = (\Gvf \circ q)(t)$ and $g(t) = (\Gy \circ q)(t)$ for $\Gvf, \Gy \in \Hcal^*_0$.
Since the kernel $K_q(t, s)$ of the operator $\Kcal_q$ is $2\pi$-periodic with respect to $s$-variable, it admits the Fourier series expansion:
\beq\label{fourier}
  K_q(t, s)=\sum_{k\in {\Zbb}} a_k^q(t) e^{iks}, \quad a_k^q(t) = \frac{1}{2\pi} \int_{-\pi}^{\pi} K_q(t, s) e^{-iks} ds.
\eeq

We obtain the following lemma.

\begin{lemma}\label{lem:2}
Suppose that $\GO$ is a bounded planar domain with the analytic boundary and let $q$ be a regular real analytic parametrization on $[-\pi, \pi)$ of $\p\GO$. For any $0 < \Ge < \Ge_q$ there is a constant $C$ such that
\beq
| a_k^q(t) | \le C e^{-\Ge |k|}  \label{Fourier_coeff_estimate}
\eeq
for all integer $k$ and $t \in [-\pi, \pi)$.
\end{lemma}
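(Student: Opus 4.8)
The plan is to estimate the Fourier coefficients $a_k^q(t)$ by shifting the contour of integration in the $s$-variable into the complex domain, exploiting the analytic continuation of $K_q(t,s)$ to the strip $\Rbb \times i(-\Ge_q, \Ge_q)$ that was established in subsection \ref{sec:anal_ext}. Concretely, fix $t \in [-\pi,\pi)$ and fix $\Ge$ with $0 < \Ge < \Ge_q$; I would pick an intermediate $\Ge'$ with $\Ge < \Ge' < \Ge_q$ to have a little room. For $k \ge 0$ I shift the $s$-contour from $[-\pi,\pi]$ down to the horizontal segment $\{s - i\Ge' : s \in [-\pi,\pi]\}$, and for $k \le 0$ I shift it up to $\{s + i\Ge' : s \in [-\pi,\pi]\}$; by $2\pi$-periodicity of $K_q(t,\cdot)$ in $s$ the two vertical side-contributions at $s = \pm\pi$ cancel, so no boundary terms appear. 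Since $K_q(t,s)$ is analytic in $s$ on the closed strip $|\Im s| \le \Ge'$ (the only potential singularity at $s = t$ being removable by \eqnref{removable_sing}), Cauchy's theorem gives, for $k \ge 0$,
\[
a_k^q(t) = \frac{1}{2\pi} \int_{-\pi}^{\pi} K_q(t, s - i\Ge')\, e^{-ik(s - i\Ge')}\, ds = \frac{e^{-\Ge' k}}{2\pi} \int_{-\pi}^{\pi} K_q(t, s - i\Ge')\, e^{-iks}\, ds,
\]
and symmetrically for $k \le 0$ one gets a factor $e^{-\Ge'|k|}$. Taking absolute values yields $|a_k^q(t)| \le e^{-\Ge'|k|} \sup_{|\Im s| \le \Ge'} |K_q(t,s)|$.

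It then remains to bound $M := \sup\{ |K_q(t,s)| : t \in [-\pi,\pi),\ |\Im s| \le \Ge' \}$ by a finite constant; once that is done, $|a_k^q(t)| \le M e^{-\Ge'|k|} \le M e^{-\Ge|k|}$ (the last inequality being trivially true for all $k$ since $\Ge < \Ge'$, indeed one can even absorb the gap), which is exactly \eqnref{Fourier_coeff_estimate} with $C = M$. For the bound on $M$: the set $K := \{(t,s) : t \in [-\pi,\pi],\ |\Im s| \le \Ge',\ \Re s \in [-\pi,\pi]\}$ is compact, and on the complement of the diagonal $\{s = t\}$ the function $K_q(t,s)$ is continuous, while near the diagonal it extends continuously by the limit \eqnref{removable_sing} (which is finite because $q'(t) \ne 0$ for all $t$). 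Hence $K_q$ is a continuous function on the compact set $K$, so it is bounded there; by $2\pi$-periodicity in $s$ this bound holds for all $s$ with $|\Im s| \le \Ge'$. Strictly, I should check that the continuous extension across the diagonal glues correctly with the off-diagonal values — this is the standard fact that a function analytic off a point and bounded near it extends analytically, combined with the explicit formula \eqnref{removable_sing}; a short compactness/contradiction argument (if $|K_q(t_j,s_j)| \to \infty$ along a sequence, pass to a convergent subsequence $(t_j,s_j) \to (t_0,s_0) \in K$ and derive a contradiction with continuity of the extended kernel) makes this rigorous without case analysis.

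The main obstacle, and the only place requiring genuine care, is the handling of the diagonal singularity $s = t$ during the contour shift: when $t \in (-\pi,\pi)$ the point $s = t$ lies on the original contour, so one cannot naively apply Cauchy's theorem on the region between the two contours without first checking the singularity is removable there. This is precisely what \eqnref{removable_sing} provides — after the removable singularity is filled in, $K_q(t,\cdot)$ is genuinely holomorphic on a full neighborhood of the closed strip segment, and the contour deformation is justified. A secondary minor point is the cancellation of the two vertical connecting segments between the old and new contours at $\Re s = \pm\pi$: this follows because $K_q(t,s)e^{-iks}$ is $2\pi$-periodic in $s$ (the exponential is $2\pi$-periodic for integer $k$, and $K_q(t,\cdot)$ is $2\pi$-periodic by construction), so the integrals over the left and right vertical segments are equal in magnitude and opposite in orientation. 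Everything else is routine.
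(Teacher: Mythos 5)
Your proposal is correct and follows essentially the same route as the paper: shift the $s$-integration contour into the strip (equivalently, integrate over a closed rectangle and use $2\pi$-periodicity to cancel the vertical sides), then bound the resulting integral by the supremum of $|K_q|$ on the shifted horizontal segment, which yields the factor $e^{-\Ge|k|}$. Your use of an intermediate $\Ge'$ and the explicit care with the removable singularity at $s=t$ are small stylistic additions, but the argument is the same.
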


\begin{proof}
If $k>0$, then we take a rectangular contour $R$ with the positive orientation in $\Rbb \times i(-\Ge_q, \Ge_q)$:
$$
R = R_1\cup R_2\cup R_3\cup R_4 := [-\pi, \pi]\cup [\pi, \pi-i\Ge] \cup [\pi-i\Ge, -\pi-i\Ge] \cup [-\pi-i \Ge, -\pi].
$$
Since $K_q(t, s)$ is analytic in $G_{\p \GO}$ and $2\pi$-periodic with respect to $s$-variable, we have
\begin{align*}
    0= \int_{R} K_q(t, s) e^{-iks} ds = & \Big\{\int_{R_1}+\int_{R_2}+\int_{R_3}+\int_{R_4}\Big\} K_q(t, s) e^{-iks} ds \\
    = & \Big\{\int_{R_1}+\int_{R_3} \Big\} K_q(t, s) e^{-iks} ds ,
\end{align*}
which implies that
\begin{equation*}
    2 \pi a_k^q(t)=\int_{R_1} K_q(t, s) e^{-iks} ds =-\int_{R_3} K_q(t, s) e^{-iks} ds =-\int_{\pi - i\Ge}^{-\pi - i\Ge} K_q(t, s) e^{-iks} ds.
\end{equation*}
  Since $K_q(t, s)$ is uniformly bounded for $t \in [-\pi, \pi)$ and $s \in R_3$, we have
  \begin{equation*}
    |a_k^q(t)| \le \left| \frac{1}{2 \pi} \int_{\pi - i\Ge}^{-\pi - i\Ge}K_q(t, s)  e^{-iks}  ds \right| \le \left| \frac{1}{2 \pi} \int_{-\pi - i\Ge}^{\pi - i\Ge} C_0 e^{-\Ge k} ds \right| = C_0 e^{-\Ge k},
  \end{equation*}
  where the constant $C_0 > 0$ is independent of $k$ and $t \in [-\pi, \pi)$.

If $k < 0$, we can prove \eqref{Fourier_coeff_estimate} by taking the rectangular contour
$$
R = [-\pi, \pi] \cup [\pi, \pi + i\Ge] \cup [\pi + i\Ge, -\pi + i\Ge] \cup [-\pi + i \Ge, -\pi].
$$
The estimate \eqref{Fourier_coeff_estimate} for $k = 0$ is obvious.
Thus the lemma follows.
\end{proof}

Let us now recall the Weyl-Courant min-max principle (see, for example, \cite{MR728688} for a proof).

\begin{theorem}[the Weyl-Courant min-max principle] \label{lem:W-C_minimax}
If $\Tcal$ is a compact symmetric operator on a Hilbert space, whose eigenvalues $\{ \Gk_n \}_{n = 1}^\infty$ are arranged as
$$
| \Gk_1 | \ge | \Gk_2 | \ge \cdots \ge | \Gk_n | \ge \cdots \to 0.
$$
If $\Scal$ is an operator of rank $\le n$, then
$$
\| \Tcal - \Scal \| \ge | \Gk_{n + 1} |.
$$
\end{theorem}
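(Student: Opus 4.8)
The plan is to reduce everything to finite-dimensional linear algebra: produce an $(n+1)$-dimensional subspace on which $\Tcal$ acts as multiplication by scalars of modulus at least $|\Gk_{n+1}|$, observe that such a subspace must meet the kernel of the rank-$\le n$ operator $\Scal$ nontrivially, and then test $\Tcal-\Scal$ against a unit vector lying in that intersection.

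First I would dispose of the trivial case: if $\Gk_{n+1}=0$ the asserted inequality $\|\Tcal-\Scal\|\ge 0$ holds automatically, so assume $\Gk_{n+1}\neq 0$. Since the sequence $(|\Gk_j|)_j$ is nonincreasing, this forces $\Gk_1,\dots,\Gk_{n+1}$ all to be nonzero. By the spectral theorem for compact self-adjoint operators, every nonzero eigenvalue has finite multiplicity and eigenspaces attached to distinct eigenvalues are mutually orthogonal; hence, with the eigenvalues listed with multiplicity as in the hypothesis, there exist orthonormal vectors $e_1,\dots,e_{n+1}$ with $\Tcal e_j=\Gk_j e_j$ for $j=1,\dots,n+1$. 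Let $V$ be their linear span, a subspace of dimension exactly $n+1$.

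Next I would invoke the rank hypothesis. The restriction $\Scal|_V$ is a linear map from the $(n+1)$-dimensional space $V$ into the range of $\Scal$, which has dimension at most $n$, so by the rank--nullity theorem $\Scal|_V$ has a nontrivial kernel; pick $x\in V$ with $x\neq 0$ and $\Scal x=0$, normalized so that $\|x\|=1$. Writing $x=\sum_{j=1}^{n+1}c_j e_j$ with $\sum_{j=1}^{n+1}|c_j|^2=1$, the bound is a one-line computation:
\[
\|(\Tcal-\Scal)x\|^2=\|\Tcal x\|^2=\Big\|\sum_{j=1}^{n+1}c_j\Gk_j e_j\Big\|^2=\sum_{j=1}^{n+1}|c_j|^2|\Gk_j|^2\ge |\Gk_{n+1}|^2\sum_{j=1}^{n+1}|c_j|^2=|\Gk_{n+1}|^2,
\]
where I used the orthonormality of the $e_j$ and the monotonicity $|\Gk_1|\ge\cdots\ge|\Gk_{n+1}|$. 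Since $\|x\|=1$ this yields $\|\Tcal-\Scal\|\ge\|(\Tcal-\Scal)x\|\ge|\Gk_{n+1}|$, which is the assertion.

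There is no substantial obstacle in this argument: the only external input is the spectral theorem for compact self-adjoint operators, and the only point requiring a moment's care is the bookkeeping of multiplicities, i.e. making sure the orthonormal family $e_1,\dots,e_{n+1}$ genuinely exists. Zero eigenvalues among the first $n$ cannot interfere, since monotonicity would then force $\Gk_{n+1}=0$, the case already handled; and boundedness of $\Scal$ is irrelevant to the inequality, which holds even if $\|\Tcal-\Scal\|=\infty$.
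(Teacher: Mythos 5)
Your proof is correct and complete: the paper itself does not prove this statement (it only cites Little--Reade \cite{MR728688}), and your argument --- spanning the first $n+1$ eigenvectors, extracting a unit vector in $\ker\Scal$ by rank--nullity, and testing $\Tcal-\Scal$ against it --- is exactly the standard proof of the Weyl--Courant bound. The handling of the degenerate case $\Gk_{n+1}=0$ and of multiplicities is done properly, so nothing further is needed.
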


\noindent{\sl Proof of Theorem \ref{thm:3}}.
Suppose that $\Ge < \Ge_{\p\GO}$ and let $q$ be a regular real analytic parametrization of $\p\GO$ such that $\Ge < \Ge_q \le \Ge_{\p\GO}$.
Using the Fourier expansion of $K_q(t,s)$ given in \eqnref{fourier} we define
\begin{equation*}
S_n(t, s) = a_0^q(t)+\sum_{|k| \le n-1} a_{k}^q(t)e^{iks}
\end{equation*}
and
\begin{equation*}
\Scal_n[f](t) = \int_{-\pi}^{\pi} S_n(t, s) f(s) ds.
\end{equation*}
Then $\Scal_n$ is of rank at most $2(n-1)$ on $H_0$. So it follows from the Weyl-Courant min-max principle that
\beq\label{wcest}
\| \Kcal_q - \Scal_n \| \ge |\Gl_{2n}|.
\eeq

Let $f \in H_0$. It holds that
\beq
\| f \|_{-1/2}^2 \approx \sum_{k \neq 0} \frac{|\hat{f}(k)|^2}{|k|},
\eeq
where $\hat{f}(k)$ is the $k$-th Fourier coefficient of $f$, namely,
$$
\hat{f}(k) = \frac{1}{2\pi} \int_{-\pi}^{\pi} f(s) e^{-iks} ds.
$$
Note that
$$
(\Kcal_q - \Scal_n)[f](t) = 2\pi \sum_{|k| \ge n} \hat{f}(-k) a_k^q(t).
$$
So, it follows from the Cauchy-Schwarz inequality that
\begin{align*}
\left\| (\Kcal_q - \Scal_n)[f] \right\|_{-1/2} &\le C \left( \sum_{|k| \ge n} \frac{|\hat{f}(k)|^2}{|k|} \right)^{1/2}
\left( \sum_{|k| \ge n} |k| \| a_k^q \|_{-1/2}^2 \right)^{1/2} \\
&\le C \| f \|_{-1/2}
\left( \sum_{|k| \ge n} |k| \| a_k^q \|_{-1/2}^2 \right)^{1/2}
\end{align*}
for some constant $C$ which may be different at each occurrence. If $0< \Ge < \Ge' < \Ge_q$, then we obtain using \eqref{Fourier_coeff_estimate} that
$$
\sum_{|k| \ge n} |k| \| a_k^q \|_{-1/2}^2
\le  C_1 \sum_{|k| \ge n} |k| e^{-2 \Ge' |k|}
\le  C_2 \sum_{|k| \ge n}  e^{-2 \Ge |k|} \le C_3 e^{-2 \Ge n},
$$
and hence
\beq\label{est10}
\left\| (\Kcal_q - \Scal_n)[f] \right\|_{-1/2}
\le C e^{-\Ge n} \| f \|_{-1/2}.
\eeq
We then obtain \eqnref{eigenest} from \eqnref{twonorm}, \eqnref{wcest} and \eqnref{est10}. This completes the proof.
\qed

It is worth mentioning that one can also show the exponential decay of the eigenvalues for $\Kcal_q$ by using the Chebyshev expansion of $K_q(t,s)$. The Chebyshev expansion has been used in \cite{MR728688} to study eigenvalues of operators with real analytic symmetric kernels.
Using this method one can show that \eqnref{eigenest} holds for all $\Ge$ such that
\beq
\Ge < \Ge_c := \log \left( \frac{1}{\pi} \left( \Ge_{\p\GO} + \sqrt{\pi^2 + \Ge_{\p\GO}^2} \right) \right).
\eeq
This result is weaker than Theorem \ref{thm:3} since $\Ge_c < \Ge_\GO$. So, we omit the detail.

\section{Examples} \label{sec:Examples}

Theorem \ref{thm:3} shows that \eqnref{eigenest} holds for all $\Ge < \Ge_{\p\GO}$. In this section we present a few examples of domains to show that this result is optimal in the sense that $\Ge_{\p\GO}$ is the smallest number with such a property.

\begin{example}[circles] \label{ex:circle}
Suppose that $\p \GO$ is a circle. Then one can easily see that $\Ge_{\p\GO} = + \infty$. So \eqnref{eigenest} shows that for any number $\Gb>0$ there is a constant $C$ such that
$$
|\Gl_{2n}| \le C \Gb^n
$$
for all $n$. Indeed, it is known that the only eigenvalue of the NP operator on $\Hcal^*_0$ is $0$.
\end{example}

\begin{example}[ellipses] \label{ex:ellipse}
Suppose that $\p\GO$ is the ellipse given by
$$
\p \GO : \frac{x^2}{a^2}+\frac{y^2}{b^2}=1,\; a>b>0.
$$
A parametrization of $\p\GO$ is given by
$$
q(t)  = a \cos{t} + ib \sin{t} = \frac{a + b}{2} e^{i t} + \frac{a - b}{2} e^{- i t} , \quad t \in [-\pi, \pi).
$$
Note that $q$ admits analytic continuation to the whole complex plane, and so the maximal Grauert radius is $\infty$.

To compute the modified maximal Grauert radius $\Ge_q$, suppose that $q(t)= q(s)$ where $t \in [-\pi, \pi) \times i \Rbb$ and $s \in [-\pi, \pi)$.
Nontrivial solutions of this equation are given by
$$
e^{it} = \frac{a - b}{a + b} e^{-is},
$$
which implies that
$$
e^{-\Im t} = \frac{a - b}{a + b},
$$
and hence
\beq\label{eq}
    \Ge_q = \log \frac{a + b}{a - b}.
\eeq
Therefore, from Theorem~\ref{thm:3}, we have the exponential decay estimate
\beq\label{ellest}
    |\Gl_{2n-1}| = |\Gl_{2n}| \le C \Gb^n \quad \mbox{for any } \Gb > \frac{a - b}{a + b}.
\eeq

In view of \eqnref{eigell}, we see that the number $\frac{a - b}{a + b}$ in \eqnref{eq} is optimal.
It means in particular that the condition (G) is necessary for the definition of the modified maximal Grauert radius in this paper.
\end{example}

\begin{example}[lima\c{c}ons of Pascal] \label{ex:limacon1}
Let $A$ be a number such that $0 < A < \frac{1}{2}$. The lima\c{c}on of Pascal $\p\GO_A$ is defined by
\beq
\p \GO_A : \ w = z + A z^2, \  z=e^{it}, \ t \in [-\pi, \pi) .
\eeq
See Fig. 1 for the lima\c{c}on with $A=0.4$.

{\unitlength 0.1in%
\begin{picture}(15.2900,15.8400)(21.0000,-30.9000)%
%
\special{pn 8}%
\special{pa 2428 2359}%
\special{pa 3629 2359}%
\special{fp}%
\special{sh 1}%
\special{pa 3629 2359}%
\special{pa 3562 2339}%
\special{pa 3576 2359}%
\special{pa 3562 2379}%
\special{pa 3629 2359}%
\special{fp}%
%
\special{pn 8}%
\special{pa 2845 3090}%
\special{pa 2845 1570}%
\special{fp}%
\special{sh 1}%
\special{pa 2845 1570}%
\special{pa 2825 1637}%
\special{pa 2845 1623}%
\special{pa 2865 1637}%
\special{pa 2845 1570}%
\special{fp}%
\special{pn 8}%
\special{pa 3367 2366}%
\special{pa 3367 2335}%
\special{pa 3366 2324}%
\special{pa 3364 2304}%
\special{pa 3363 2293}%
\special{pa 3361 2273}%
\special{pa 3360 2262}%
\special{pa 3348 2202}%
\special{pa 3345 2192}%
\special{pa 3343 2182}%
\special{pa 3340 2172}%
\special{pa 3337 2163}%
\special{pa 3331 2143}%
\special{pa 3328 2134}%
\special{pa 3324 2125}%
\special{pa 3320 2115}%
\special{pa 3317 2106}%
\special{pa 3301 2070}%
\special{pa 3296 2061}%
\special{pa 3292 2053}%
\special{pa 3287 2044}%
\special{pa 3283 2036}%
\special{pa 3263 2004}%
\special{pa 3257 1996}%
\special{pa 3252 1988}%
\special{pa 3246 1981}%
\special{pa 3241 1974}%
\special{pa 3235 1966}%
\special{pa 3229 1959}%
\special{pa 3224 1952}%
\special{pa 3218 1945}%
\special{pa 3212 1939}%
\special{pa 3206 1932}%
\special{pa 3199 1926}%
\special{pa 3187 1914}%
\special{pa 3180 1908}%
\special{pa 3174 1902}%
\special{pa 3167 1897}%
\special{pa 3161 1891}%
\special{pa 3133 1871}%
\special{pa 3126 1867}%
\special{pa 3120 1862}%
\special{pa 3112 1858}%
\special{pa 3084 1842}%
\special{pa 3070 1836}%
\special{pa 3062 1833}%
\special{pa 3048 1827}%
\special{pa 3040 1824}%
\special{pa 3026 1820}%
\special{pa 3018 1818}%
\special{pa 3004 1814}%
\special{pa 2996 1813}%
\special{pa 2989 1811}%
\special{pa 2982 1810}%
\special{pa 2974 1809}%
\special{pa 2960 1807}%
\special{pa 2952 1807}%
\special{pa 2945 1806}%
\special{pa 2916 1806}%
\special{pa 2909 1807}%
\special{pa 2902 1807}%
\special{pa 2867 1812}%
\special{pa 2860 1814}%
\special{pa 2853 1815}%
\special{pa 2847 1817}%
\special{pa 2833 1821}%
\special{pa 2827 1823}%
\special{pa 2820 1826}%
\special{pa 2814 1828}%
\special{pa 2807 1831}%
\special{pa 2801 1833}%
\special{pa 2771 1848}%
\special{pa 2765 1852}%
\special{pa 2759 1855}%
\special{pa 2753 1859}%
\special{pa 2748 1863}%
\special{pa 2742 1866}%
\special{pa 2737 1870}%
\special{pa 2731 1874}%
\special{pa 2726 1878}%
\special{pa 2721 1883}%
\special{pa 2711 1891}%
\special{pa 2706 1896}%
\special{pa 2701 1900}%
\special{pa 2696 1905}%
\special{pa 2691 1909}%
\special{pa 2687 1914}%
\special{pa 2682 1919}%
\special{pa 2650 1959}%
\special{pa 2647 1965}%
\special{pa 2643 1970}%
\special{pa 2640 1975}%
\special{pa 2637 1981}%
\special{pa 2633 1986}%
\special{pa 2630 1991}%
\special{pa 2627 1997}%
\special{pa 2624 2002}%
\special{pa 2622 2008}%
\special{pa 2619 2013}%
\special{pa 2616 2019}%
\special{pa 2614 2024}%
\special{pa 2612 2030}%
\special{pa 2609 2036}%
\special{pa 2607 2041}%
\special{pa 2605 2047}%
\special{pa 2603 2052}%
\special{pa 2601 2058}%
\special{pa 2599 2063}%
\special{pa 2597 2069}%
\special{pa 2596 2074}%
\special{pa 2594 2080}%
\special{pa 2593 2085}%
\special{pa 2591 2091}%
\special{pa 2589 2101}%
\special{pa 2587 2107}%
\special{pa 2585 2117}%
\special{pa 2585 2123}%
\special{pa 2582 2138}%
\special{pa 2582 2143}%
\special{pa 2581 2148}%
\special{pa 2581 2153}%
\special{pa 2580 2158}%
\special{pa 2580 2168}%
\special{pa 2579 2173}%
\special{pa 2579 2196}%
\special{pa 2580 2201}%
\special{pa 2580 2214}%
\special{pa 2581 2218}%
\special{pa 2581 2226}%
\special{pa 2582 2230}%
\special{pa 2582 2234}%
\special{pa 2584 2242}%
\special{pa 2584 2246}%
\special{pa 2585 2249}%
\special{pa 2585 2253}%
\special{pa 2586 2256}%
\special{pa 2587 2260}%
\special{pa 2588 2263}%
\special{pa 2588 2267}%
\special{pa 2591 2276}%
\special{pa 2592 2280}%
\special{pa 2594 2286}%
\special{pa 2594 2288}%
\special{pa 2597 2297}%
\special{pa 2598 2299}%
\special{pa 2599 2302}%
\special{pa 2600 2304}%
\special{pa 2601 2307}%
\special{pa 2602 2309}%
\special{pa 2603 2312}%
\special{pa 2603 2314}%
\special{pa 2609 2326}%
\special{pa 2609 2328}%
\special{pa 2611 2332}%
\special{pa 2612 2333}%
\special{pa 2612 2335}%
\special{pa 2613 2337}%
\special{pa 2614 2338}%
\special{pa 2614 2340}%
\special{pa 2615 2342}%
\special{pa 2616 2343}%
\special{pa 2616 2345}%
\special{pa 2617 2346}%
\special{pa 2617 2347}%
\special{pa 2618 2349}%
\special{pa 2618 2350}%
\special{pa 2619 2351}%
\special{pa 2620 2358}%
\special{pa 2621 2359}%
\special{pa 2621 2373}%
\special{pa 2620 2374}%
\special{pa 2620 2377}%
\special{pa 2619 2378}%
\special{pa 2619 2380}%
\special{pa 2618 2382}%
\special{pa 2618 2383}%
\special{pa 2617 2384}%
\special{pa 2616 2389}%
\special{pa 2615 2390}%
\special{pa 2615 2392}%
\special{pa 2614 2393}%
\special{pa 2613 2395}%
\special{pa 2613 2396}%
\special{pa 2610 2402}%
\special{pa 2610 2403}%
\special{pa 2605 2413}%
\special{pa 2605 2415}%
\special{pa 2604 2418}%
\special{pa 2602 2422}%
\special{pa 2601 2425}%
\special{pa 2599 2429}%
\special{pa 2597 2435}%
\special{pa 2596 2437}%
\special{pa 2596 2440}%
\special{pa 2589 2461}%
\special{pa 2589 2464}%
\special{pa 2588 2468}%
\special{pa 2587 2471}%
\special{pa 2586 2475}%
\special{pa 2586 2478}%
\special{pa 2585 2482}%
\special{pa 2584 2485}%
\special{pa 2584 2489}%
\special{pa 2583 2493}%
\special{pa 2583 2497}%
\special{pa 2582 2501}%
\special{pa 2582 2505}%
\special{pa 2581 2509}%
\special{pa 2581 2513}%
\special{pa 2580 2517}%
\special{pa 2580 2530}%
\special{pa 2579 2535}%
\special{pa 2579 2558}%
\special{pa 2580 2563}%
\special{pa 2580 2573}%
\special{pa 2581 2577}%
\special{pa 2581 2582}%
\special{pa 2582 2588}%
\special{pa 2582 2593}%
\special{pa 2584 2603}%
\special{pa 2584 2608}%
\special{pa 2585 2613}%
\special{pa 2586 2619}%
\special{pa 2588 2629}%
\special{pa 2590 2635}%
\special{pa 2591 2640}%
\special{pa 2592 2646}%
\special{pa 2594 2651}%
\special{pa 2595 2656}%
\special{pa 2597 2662}%
\special{pa 2599 2667}%
\special{pa 2600 2673}%
\special{pa 2602 2678}%
\special{pa 2606 2690}%
\special{pa 2609 2695}%
\special{pa 2611 2701}%
\special{pa 2613 2706}%
\special{pa 2616 2712}%
\special{pa 2618 2717}%
\special{pa 2621 2723}%
\special{pa 2624 2728}%
\special{pa 2627 2734}%
\special{pa 2630 2739}%
\special{pa 2633 2745}%
\special{pa 2639 2755}%
\special{pa 2642 2761}%
\special{pa 2646 2766}%
\special{pa 2649 2771}%
\special{pa 2653 2777}%
\special{pa 2681 2812}%
\special{pa 2709 2840}%
\special{pa 2719 2848}%
\special{pa 2725 2853}%
\special{pa 2735 2861}%
\special{pa 2741 2865}%
\special{pa 2746 2868}%
\special{pa 2752 2872}%
\special{pa 2757 2876}%
\special{pa 2763 2879}%
\special{pa 2769 2883}%
\special{pa 2799 2898}%
\special{pa 2806 2901}%
\special{pa 2812 2903}%
\special{pa 2819 2906}%
\special{pa 2825 2908}%
\special{pa 2832 2910}%
\special{pa 2838 2912}%
\special{pa 2852 2916}%
\special{pa 2858 2918}%
\special{pa 2865 2919}%
\special{pa 2872 2921}%
\special{pa 2900 2925}%
\special{pa 2907 2925}%
\special{pa 2914 2926}%
\special{pa 2943 2926}%
\special{pa 2950 2925}%
\special{pa 2958 2925}%
\special{pa 2972 2923}%
\special{pa 2980 2922}%
\special{pa 2994 2920}%
\special{pa 3002 2918}%
\special{pa 3009 2917}%
\special{pa 3017 2915}%
\special{pa 3031 2911}%
\special{pa 3039 2908}%
\special{pa 3046 2906}%
\special{pa 3060 2900}%
\special{pa 3068 2897}%
\special{pa 3075 2894}%
\special{pa 3082 2890}%
\special{pa 3089 2887}%
\special{pa 3097 2883}%
\special{pa 3125 2867}%
\special{pa 3139 2857}%
\special{pa 3145 2852}%
\special{pa 3166 2837}%
\special{pa 3172 2831}%
\special{pa 3179 2826}%
\special{pa 3191 2814}%
\special{pa 3198 2808}%
\special{pa 3204 2801}%
\special{pa 3210 2795}%
\special{pa 3222 2781}%
\special{pa 3228 2775}%
\special{pa 3234 2768}%
\special{pa 3239 2760}%
\special{pa 3251 2746}%
\special{pa 3286 2690}%
\special{pa 3291 2681}%
\special{pa 3295 2673}%
\special{pa 3300 2664}%
\special{pa 3320 2619}%
\special{pa 3323 2610}%
\special{pa 3327 2600}%
\special{pa 3330 2591}%
\special{pa 3333 2581}%
\special{pa 3336 2572}%
\special{pa 3342 2552}%
\special{pa 3345 2543}%
\special{pa 3347 2533}%
\special{pa 3350 2523}%
\special{pa 3356 2493}%
\special{pa 3358 2482}%
\special{pa 3359 2472}%
\special{pa 3361 2462}%
\special{pa 3362 2452}%
\special{pa 3363 2441}%
\special{pa 3365 2421}%
\special{pa 3366 2410}%
\special{pa 3366 2400}%
\special{pa 3367 2389}%
\special{pa 3367 2369}%
\special{fp}%
\put(32.3000,-18.2000){\makebox(0,0)[lb]{$A=0.4$}}%
\put(35.8400,-23.3300){\makebox(0,0)[lb]{$x$}}%
\put(28.7800,-16.3600){\makebox(0,0)[lb]{$y$}}%
\put(21.4000,-18.3000){\makebox(0,0)[lb]{Fig.1}}%
\end{picture}}%

Let us first compute eigenvalues of the NP operator on $\p\GO$. For doing so, we recall that the polar equation of an ellipse with one focus at the origin is, up to similarity,
$$
r = \frac{1}{1 + e \cos{\Gt}}
$$
where $e$ is the eccentricity. Let us denote the ellipse by $\p E_e$. In complex notation $\p E_e$ is given by
$$
w= f(z):= \frac{z}{1 + e \frac{z + z^{-1}}{2}} = \frac{2}{e + 2 z^{-1} + e z^{-2}}, \quad |z|=1.
$$
Let $h$ be the bilinear transformation defined by
\beq\label{hw}
h(w):= \frac{-e w+2}{2w}.
\eeq
Then we have
\beq\label{hfz}
h(f(z)) = z^{-1} + \frac{e}{2} z^{-2}.
\eeq
This is the lima\c{c}on with $A = \frac{e}{2}$. In short we have
\beq\label{hEGO}
h(\p E_{2A})= \p\GO_A.
\eeq
According to \cite[p.1195]{MR0104934}, eigenvalues of the NP operator are invariant under bilinear transformations, and hence NP operators on $\p E_{2A}$ and $\p\GO_A$ have identical eigenvalues. In view of \eqnref{eigell}, we see that eigenvalues of the NP operator on $\p\GO_A$ are
\beq\label{eiglima}
\pm \frac{1}{2}\left( \frac{1 - \sqrt{1 - 4 A^2}}{1 + \sqrt{1 - 4 A^2}} \right)^n.
\eeq

A straight-forward parametrization of the lima\c{c}on $\p\GO_A$ is given by
\beq
q(t):= e^{it} + A e^{2it}, \quad t \in [-\pi, \pi).
\eeq
So, $q$ can be extended analytically to the whole complex plane. To find $\Ge_q$ we suppose $q(t)=q(s)$ for some $t \in [-\pi, \pi) \times i \Rbb$ and $s \in [-\pi, \pi)$. Then non-trivial solutions are $e^{it} = -e^{is} - 1/A$, and hence $e^{-\Im t}= |e^{is} +1/A|$. Therefore, we have
$$
\Ge_q = \inf_{s} \log \left| e^{is} + \frac{1}{A} \right| = \log \left( \frac{1}{A} - 1 \right).
$$
So we infer from Theorem~\ref{thm:3} that
  \begin{equation*}
    |\Gl_{2n-1}| = |\Gl_{2n}| \le C \Gb^n \quad \mbox{for any } \Gb > \frac{A}{1 - A}.
  \end{equation*}
One can see from \eqnref{eiglima} that this estimate is not optimal since
$$
\frac{1 - \sqrt{1 - 4 A^2}}{1 + \sqrt{1 - 4 A^2}} < \frac{A}{1 - A} .
$$

However, we may use another parametrization of $\p\GO_A$ to obtain an optimal estimate. In fact, let $e=2A$, and
$$
a:= \frac{1}{1-e^2}, \quad b:= a \sqrt{1-e^2}
$$
so that
$$
g(z) = \frac{a + b}{2} z + \frac{a - b}{2} z^{-1} + a e, \quad |z|=1
$$
is a complex parametrization of $\p E_e$. Using the bilinear transformation $h$ in \eqnref{hw}, define
\beq
q_1(t):= h(g(e^{it})) .
\eeq
Then, \eqnref{hEGO} shows that $q_1(t)$, $t \in [-\pi, \pi)$, is a parametrization of $\p\GO_A$.

If $q_1(t)=q_1(s)$, then $g(e^{it})=g(e^{is})$. So, as shown in Example~\ref{ex:ellipse}, we have
$$
\Ge_{q_1} = \log \frac{a + b}{a - b} =\log \frac{1 + \sqrt{1 -4 A^2}}{1 - \sqrt{1 - 4 A^2}},
$$
which yields an optimal estimate.
\end{example}

It is worth mentioning that all three examples above show that \eqnref{eigenest} holds even for $\Ge=\Ge_{\p\GO}$, namely, there is a constant $C$ such that
\beq
|\Gl_{2n}| \le C e^{-n\Ge_{\p\GO}}
\eeq
for all $n$. It is interesting to prove this.

Let us present one more example of a curve on which the NP eigenvalues are not known.

\begin{example}[the transcendental curves]
We consider the transcendental curve
\begin{equation*}
\p \GO : w = e^{A z}, \ |z|=1, \  0 < |A| < \pi .
    \quad \mbox{(See Fig.2).}
\end{equation*}
{\unitlength 0.1in%
\begin{picture}(25.1000,15.8400)(21.0000,-30.9000)%
%
\special{pn 8}%
\special{pa 2428 2359}%
\special{pa 4120 2359}%
\special{fp}%
\special{sh 1}%
\special{pa 4120 2359}%
\special{pa 4053 2339}%
\special{pa 4067 2359}%
\special{pa 4053 2379}%
\special{pa 4120 2359}%
\special{fp}%
%
\special{pn 8}%
\special{pa 2845 3090}%
\special{pa 2845 1570}%
\special{fp}%
\special{sh 1}%
\special{pa 2845 1570}%
\special{pa 2825 1637}%
\special{pa 2845 1623}%
\special{pa 2865 1637}%
\special{pa 2845 1570}%
\special{fp}%
\special{pn 8}%
\special{pa 3048 1977}%
\special{pa 3046 1982}%
\special{pa 3043 1987}%
\special{pa 3039 1997}%
\special{pa 3036 2001}%
\special{pa 3030 2016}%
\special{pa 3028 2020}%
\special{pa 3026 2025}%
\special{pa 3025 2030}%
\special{pa 3023 2034}%
\special{pa 3021 2039}%
\special{pa 3020 2043}%
\special{pa 3018 2048}%
\special{pa 3017 2052}%
\special{pa 3015 2056}%
\special{pa 3014 2061}%
\special{pa 3012 2065}%
\special{pa 3011 2069}%
\special{pa 3010 2074}%
\special{pa 3008 2082}%
\special{pa 3006 2086}%
\special{pa 3002 2102}%
\special{pa 3002 2106}%
\special{pa 2998 2122}%
\special{pa 2998 2125}%
\special{pa 2996 2133}%
\special{pa 2996 2136}%
\special{pa 2994 2144}%
\special{pa 2994 2147}%
\special{pa 2993 2151}%
\special{pa 2993 2154}%
\special{pa 2992 2158}%
\special{pa 2992 2161}%
\special{pa 2991 2164}%
\special{pa 2991 2168}%
\special{pa 2990 2171}%
\special{pa 2990 2178}%
\special{pa 2989 2181}%
\special{pa 2989 2187}%
\special{pa 2988 2190}%
\special{pa 2988 2199}%
\special{pa 2987 2202}%
\special{pa 2987 2211}%
\special{pa 2986 2214}%
\special{pa 2986 2228}%
\special{pa 2985 2231}%
\special{pa 2985 2252}%
\special{pa 2984 2255}%
\special{pa 2984 2476}%
\special{pa 2985 2479}%
\special{pa 2985 2500}%
\special{pa 2986 2503}%
\special{pa 2986 2517}%
\special{pa 2987 2520}%
\special{pa 2987 2532}%
\special{pa 2988 2535}%
\special{pa 2988 2541}%
\special{pa 2989 2544}%
\special{pa 2989 2550}%
\special{pa 2990 2553}%
\special{pa 2990 2560}%
\special{pa 2991 2563}%
\special{pa 2991 2567}%
\special{pa 2992 2570}%
\special{pa 2992 2573}%
\special{pa 2993 2577}%
\special{pa 2993 2580}%
\special{pa 2994 2584}%
\special{pa 2994 2587}%
\special{pa 2995 2591}%
\special{pa 2995 2594}%
\special{pa 2997 2602}%
\special{pa 2997 2605}%
\special{pa 3000 2617}%
\special{pa 3000 2621}%
\special{pa 3002 2629}%
\special{pa 3003 2632}%
\special{pa 3004 2636}%
\special{pa 3005 2641}%
\special{pa 3008 2653}%
\special{pa 3010 2657}%
\special{pa 3011 2661}%
\special{pa 3012 2666}%
\special{pa 3013 2670}%
\special{pa 3015 2674}%
\special{pa 3016 2679}%
\special{pa 3018 2683}%
\special{pa 3019 2687}%
\special{pa 3021 2692}%
\special{pa 3022 2696}%
\special{pa 3026 2706}%
\special{pa 3028 2710}%
\special{pa 3032 2720}%
\special{pa 3034 2724}%
\special{pa 3040 2739}%
\special{pa 3042 2743}%
\special{pa 3045 2748}%
\special{pa 3047 2753}%
\special{pa 3050 2758}%
\special{pa 3052 2763}%
\special{pa 3073 2798}%
\special{pa 3077 2803}%
\special{pa 3080 2808}%
\special{pa 3088 2818}%
\special{pa 3091 2823}%
\special{pa 3103 2838}%
\special{pa 3108 2843}%
\special{pa 3116 2853}%
\special{pa 3135 2872}%
\special{pa 3141 2877}%
\special{pa 3146 2882}%
\special{pa 3151 2886}%
\special{pa 3157 2891}%
\special{pa 3162 2896}%
\special{pa 3174 2904}%
\special{pa 3180 2909}%
\special{pa 3198 2921}%
\special{pa 3205 2925}%
\special{pa 3211 2929}%
\special{pa 3218 2933}%
\special{pa 3225 2936}%
\special{pa 3232 2940}%
\special{pa 3239 2943}%
\special{pa 3246 2947}%
\special{pa 3254 2950}%
\special{pa 3261 2953}%
\special{pa 3269 2956}%
\special{pa 3277 2958}%
\special{pa 3285 2961}%
\special{pa 3317 2969}%
\special{pa 3326 2971}%
\special{pa 3335 2972}%
\special{pa 3343 2974}%
\special{pa 3352 2975}%
\special{pa 3361 2975}%
\special{pa 3370 2976}%
\special{pa 3407 2976}%
\special{pa 3416 2975}%
\special{pa 3436 2973}%
\special{pa 3445 2972}%
\special{pa 3465 2968}%
\special{pa 3474 2966}%
\special{pa 3484 2963}%
\special{pa 3494 2961}%
\special{pa 3504 2957}%
\special{pa 3514 2954}%
\special{pa 3544 2942}%
\special{pa 3564 2932}%
\special{pa 3573 2927}%
\special{pa 3613 2903}%
\special{pa 3622 2896}%
\special{pa 3632 2888}%
\special{pa 3641 2881}%
\special{pa 3651 2873}%
\special{pa 3660 2865}%
\special{pa 3669 2856}%
\special{pa 3678 2848}%
\special{pa 3687 2838}%
\special{pa 3696 2829}%
\special{pa 3705 2819}%
\special{pa 3721 2799}%
\special{pa 3730 2788}%
\special{pa 3738 2777}%
\special{pa 3745 2766}%
\special{pa 3761 2742}%
\special{pa 3775 2718}%
\special{pa 3782 2705}%
\special{pa 3795 2679}%
\special{pa 3801 2666}%
\special{pa 3806 2653}%
\special{pa 3812 2639}%
\special{pa 3822 2611}%
\special{pa 3827 2596}%
\special{pa 3832 2582}%
\special{pa 3840 2552}%
\special{pa 3843 2537}%
\special{pa 3847 2522}%
\special{pa 3850 2507}%
\special{pa 3852 2491}%
\special{pa 3855 2476}%
\special{pa 3857 2460}%
\special{pa 3859 2445}%
\special{pa 3862 2397}%
\special{pa 3862 2382}%
\special{pa 3863 2366}%
\special{pa 3862 2350}%
\special{pa 3862 2334}%
\special{pa 3861 2318}%
\special{pa 3860 2303}%
\special{pa 3859 2287}%
\special{pa 3857 2271}%
\special{pa 3855 2256}%
\special{pa 3852 2240}%
\special{pa 3850 2225}%
\special{pa 3847 2210}%
\special{pa 3843 2195}%
\special{pa 3840 2180}%
\special{pa 3832 2150}%
\special{pa 3827 2135}%
\special{pa 3812 2093}%
\special{pa 3806 2079}%
\special{pa 3801 2066}%
\special{pa 3794 2052}%
\special{pa 3782 2026}%
\special{pa 3775 2014}%
\special{pa 3768 2001}%
\special{pa 3760 1989}%
\special{pa 3753 1977}%
\special{pa 3721 1933}%
\special{pa 3713 1923}%
\special{pa 3704 1913}%
\special{pa 3696 1903}%
\special{pa 3687 1893}%
\special{pa 3669 1875}%
\special{pa 3660 1867}%
\special{pa 3650 1859}%
\special{pa 3632 1843}%
\special{pa 3612 1829}%
\special{pa 3603 1823}%
\special{pa 3593 1816}%
\special{pa 3583 1811}%
\special{pa 3573 1805}%
\special{pa 3553 1795}%
\special{pa 3544 1790}%
\special{pa 3514 1778}%
\special{pa 3504 1775}%
\special{pa 3494 1771}%
\special{pa 3484 1769}%
\special{pa 3474 1766}%
\special{pa 3464 1764}%
\special{pa 3455 1762}%
\special{pa 3445 1760}%
\special{pa 3435 1759}%
\special{pa 3426 1758}%
\special{pa 3416 1757}%
\special{pa 3407 1756}%
\special{pa 3370 1756}%
\special{pa 3361 1757}%
\special{pa 3352 1757}%
\special{pa 3343 1759}%
\special{pa 3334 1760}%
\special{pa 3326 1761}%
\special{pa 3317 1763}%
\special{pa 3301 1767}%
\special{pa 3292 1769}%
\special{pa 3284 1771}%
\special{pa 3277 1774}%
\special{pa 3269 1776}%
\special{pa 3261 1779}%
\special{pa 3254 1782}%
\special{pa 3246 1785}%
\special{pa 3239 1789}%
\special{pa 3232 1792}%
\special{pa 3225 1796}%
\special{pa 3218 1799}%
\special{pa 3211 1803}%
\special{pa 3205 1807}%
\special{pa 3198 1811}%
\special{pa 3186 1819}%
\special{pa 3179 1823}%
\special{pa 3173 1828}%
\special{pa 3168 1832}%
\special{pa 3162 1837}%
\special{pa 3156 1841}%
\special{pa 3151 1846}%
\special{pa 3146 1850}%
\special{pa 3140 1855}%
\special{pa 3116 1879}%
\special{pa 3108 1889}%
\special{pa 3103 1894}%
\special{pa 3091 1909}%
\special{pa 3088 1914}%
\special{pa 3080 1924}%
\special{pa 3077 1929}%
\special{pa 3073 1934}%
\special{pa 3052 1969}%
\special{pa 3050 1974}%
\special{pa 3047 1979}%
\special{pa 3045 1984}%
\special{fp}%
\put(37.0000,-18.1000){\makebox(0,0)[lb]{$A=i$}}%
\put(40.0000,-23.1000){\makebox(0,0)[lb]{$x$}}%
\put(28.7800,-16.3600){\makebox(0,0)[lb]{$y$}}%
\put(21.4000,-18.3000){\makebox(0,0)[lb]{Fig.2}}%
\end{picture}}%

An obvious parametrization of $\p\GO$ is given by
$$
q(t):= \exp(Ae^{it}), \quad t \in [-\pi, \pi).
$$
If $q(t)=q(s)$ for some $t \in [-\pi, \pi) \times i \Rbb$ and $s \in [-\pi, \pi)$, then non-trivial solutions are given by
$$
A e^{it} = A e^{is} + i 2\pi n, \quad n \in \Zbb \ (n \neq 0).
$$
So we have
$$
\Ge_q= \inf_{n,s} \log \left| e^{is} + \frac{i2\pi n}{A} \right|= \log \left( \frac{2\pi}{|A|}-1 \right).
$$
Thus, we have
\beq
    |\Gl_{2n-1}| = |\Gl_{2n}| \le C \Gb^n \quad\mbox{for any } \Gb > \frac{|A|}{2\pi-|A|}.
\eeq
\end{example}

\end{document}